\documentclass[12pt, reqno]{amsart}

\author[P.~Leonetti]{Paolo Leonetti}
\address{Universit\`a ``Luigi Bocconi''\\Department of Decision Sciences\\Milan, Italy}
\email{paolo.leonetti@unibocconi.it}

\author[F.~Maccheroni]{Fabio Maccheroni}
\address{Universit\`a ``Luigi Bocconi''\\Department of Decision Sciences\\Milan, Italy}
\email{fabio.maccheroni@unibocconi.it}

\keywords{Ideal cluster point, ideal convergence, G-ideal, filter base, statistical convergence.}
\subjclass[2010]{Primary: 40A35, 54A20. Secondary: 11B05.}
%40A05  	Convergence and divergence of series and sequences
%40A35  	Ideal and statistical convergence
%11B05    Density, gaps, topology
%40G15  	Summability methods using statistical convergence
%54A20  	Convergence in general topology (sequences, filters, limits, convergence spaces, etc.)
%26A03  	Foundations: limits and generalizations, elementary topology of the line

\title{Characterizations of Ideal Cluster Points}

\usepackage{amsmath}
\usepackage{amssymb}
\usepackage{amsthm}
\usepackage[left=3.5cm, right=3.5cm, paperheight=11.8in]{geometry}
\usepackage{hyperref}
\usepackage{url}
\usepackage{fancyhdr}
\usepackage{enumitem}
\usepackage{comment}
\usepackage{mathrsfs}
\usepackage{graphicx}
\usepackage[utf8]{inputenc}

\AtBeginDocument{%
   \def\MR#1{}
}

\newtheorem{thm}{Theorem}[section]
\newtheorem{cor}[thm]{Corollary}%[section]
\newtheorem{lem}[thm]{Lemma}

\theoremstyle{definition} 
\newtheorem{defi}[thm]{Definition}%[section]
\let\olddefi\defi
\renewcommand{\defi}{\olddefi\normalfont}
\newtheorem{example}[thm]{Example}
\let\oldexample\example
\renewcommand{\example}{\oldexample\normalfont}
%{\infty}

\pagestyle{fancy}
\fancyhf{}
\fancyhead[CO]{\textsc{Characterizations of Ideal Cluster Points}}
\fancyhead[CE]{\textsc{P. Leonetti} and \textsc{F. Maccheroni}}
\fancyhead[RO,LE]{\thepage}

\setlength{\headheight}{12pt}

\hypersetup{
    pdftitle={Characterizations of Ideal Cluster Points},
    pdfauthor={Paolo Leonetti and Fabio Maccheroni},
    pdfmenubar=false,
    pdffitwindow=true,
    pdfstartview=FitH,
    colorlinks=true,
    linkcolor=blue,
    citecolor=green,
    urlcolor=cyan
}

\uchyph=0

\providecommand{\MR}[1]{}
\providecommand{\bysame}{\leavevmode\hbox to3em{\hrulefill}\thinspace}
\providecommand{\MR}{\relax\ifhmode\unskip\space\fi MR }
% \MRhref is called by the amsart/book/proc definition of \MR.

\providecommand{\href}[2]{#2}
                                 
\begin{document}

\maketitle
\thispagestyle{empty}

\begin{abstract}
Given an ideal $\mathcal{I}$ on $\omega$, we prove that a sequence in a topological space $X$ is $\mathcal{I}$-convergent if and only if there exists a ``big'' $\mathcal{I}$-convergent subsequence. Then, we study several properties and show two characterizations of the set of $\mathcal{I}$-cluster points as classical cluster points of a filters on $X$ and as the smallest closed set containing ``almost all'' the sequence. As a consequence, we obtain that the underlying topology $\tau$ coincides with the topology generated by the pair $(\tau,\mathcal{I})$. 
\end{abstract}
%%%%%%%%%%%%%%%%%%%%%%%%%%%%%%%

\section{Introduction}\label{sec:introduction}

Following the concept of statistical convergence as a generalization of the ordinary convergence, Fridy \cite{MR1181163} introduced the statistical limit points and statistical cluster points of a real sequence $(x_n)$ as generalizations of accumulation points.

A real number $\ell$ is said to be a \emph{statistical limit point} of $(x_n)$ if there exists a subsequence $(x_{n_k})$ such that 
$$\lim_{k\to \infty} x_{n_k} = \ell$$ and the set of indices $\{n_k: k \in \omega\}$ has positive upper asymptotic density (see Section \ref{sec:preliminaries} for definitions). Also, $\ell$ is called \emph{statistical cluster point} provided that $$\{n\in \omega: |x_n-\ell|<\varepsilon\}$$ has positive upper asymptotic density for every $\varepsilon>0$. He proved, among others, that these concepts are not equivalent.

These notions have been studied in a number of recent papers, see e.g. \cite{PaoloMarek17, MR1372186, MR1416085, Leo17, LMM18, MR1260176, MR1758553}. Extensions of statistical convergence to more general spaces can be found in \cite{MR2904078, MR2463821, MR938459, MR1821765}, and to ideal convergence, see e.g. \cite{MR2835960, MR2320288, MR1838788, Leoproj}.

Given an ideal $\mathcal{I}$ on the positive integers $\omega$, we 
investigate various properties of $\mathcal{I}$-cluster points and $\mathcal{I}$-limit points of sequences taking values in topological spaces $(X,\tau)$. The main contributions of the article are: 
\begin{enumerate}[label=(\roman*)]
\item \label{itema1} a new characterization of $\mathcal{I}$-convergence: informally, a sequence $(x_n)$ is $\mathcal{I}$-convergent if and only if there exists a ``big'' $\mathcal{I}$-convergent subsequence (see Theorem \ref{lem:basic0}.\ref{item:basic4} and Corollary \ref{cor:cor1});
\item \label{itema2} the topology generated by the pair $(\tau,\mathcal{I})$ corresponds to the underlying topology $\tau$ (see Theorem \ref{thm:sametopology});
\item \label{itema3} a characterization of $\mathcal{I}$-cluster points as classical ``cluster points of the filter'' generated by the sequence (see Theorem \ref{thm:characterizationbourbaki});
\item \label{itema4} a characterization of the set of $\mathcal{I}$-cluster points as the smallest closed set containing ``almost all'' the sequence (see Theorem \ref{thm:char2}).
\end{enumerate}

%%%%%%%%%%%%%%%%%%%%%%%%%%%%%%%%%%%%%%%%%%%%%%%%%%%%%%%%%%%%%%%%%%%%%%%%%%%
\section{Preliminaries}\label{sec:preliminaries}

Let $\mathrm{Fin}$ be the collection of finite subsets of $\omega$. The upper asymptotic density of a set $S\subseteq \omega$ is defined by
$$
\mathrm{d}^\star(S):=\limsup_{n\to \infty} \frac{|S\cap [1,n]|}{n}\,
$$
and we denote by $\mathcal{Z}$ the collection of all $S$ such that $\mathrm{d}^\star(S)=0$. Hence, a real number $\ell$ is a statistical cluster point of a given real sequence $(x_n)$ if and only if $\{n\in \omega: |x_n-\ell|<\varepsilon\}$ does not belong to $\mathcal{Z}$ for every $\varepsilon>0$.

An ideal $\mathcal{I}$ on $\omega$ is a family of subsets of positive integers closed under taking finite unions and subsets of its elements. It is also assumed that $\mathcal{I}$ is different from the power set of $\omega$ and contains all the singletons. It is clear that $\mathrm{Fin}$ and $\mathcal{Z}$ are ideals. Many other examples can be found, e.g., in \cite[Chapter 1]{MR1711328} and \cite[Section 2]{MR3594409}. Intuitively, an ideal represents the collection of subsets of $\omega$ which are ``small'' in a suitable sense. %In addition, w
We denote by $\mathcal{I}^\star:=\{A\subseteq \omega: A^c \in \mathcal{I}\}$ the \emph{filter dual} of $\mathcal{I}$ and by $\mathcal{I}^+$ the collection of $\mathcal{I}$\emph{-positive sets}, that is, the collection of all sets which do not belong to $\mathcal{I}$. 

\begin{defi}\label{iconverg}
Given a topological space $X$, a sequence 
$x=(x_n)$ is said to be $\mathcal{I}$\emph{-convergent} to $\ell$, shortened with $x_n \to_{\mathcal{I}} \ell$, whenever $\{n: x_n \in U\} \in \mathcal{I}^\star$ for all neighborhoods $U$ of $\ell$. 
Moreover, let $\Gamma_x(\mathcal{I})$ denote the set of $\mathcal{I}$\emph{-cluster points} of $x$, that is, the set of all $\ell \in X$ such that $\{n: x_n \in U\} \in \mathcal{I}^+$ for all neighborhoods $U$ of $\ell$. 
\end{defi}

Ordinary convergence corresponds to $\mathrm{Fin}$-convergence (thus, we shorten $x_n \to_{\mathrm{Fin}} \ell$ with $x_n \to \ell$) and statistical convergence to $\mathcal{Z}$-convergence. Now, one may worder whether $\mathcal{I}$-convergence corresponds to ordinary convergence with respect to another topology on the same base set. Essentially, it never happens. %this is never the case.
\begin{example}
Let us assume that $\mathcal{I}\neq \mathrm{Fin}$ and $X$ is a topological space with at least two distinct points such that its topology $\tau$ is not the trivial topology $\tau_0$. 
Hence, there exists a set $I\in \mathcal{I}\setminus \mathrm{Fin}$; in particular, $I$ is infinite. Fix distinct $a,b \in X$ and define the sequence $(x_n)$ by 
$x_n=a$ whenever $n \notin I$ and $x_n=b$ otherwise. 
It follows by construction that $x_n \to_\mathcal{I} a$ in $(X,\tau)$. Let us assume, for the sake of contradiction, there exists a topology $\tau^\prime$ such that $x_n \to a$ in $(X,\tau^\prime)$. If there is a $\tau^\prime$-neighborhood $U$ of $a$ such that $b\notin U$, then 
$
\{n: x_n\notin U\} =I. 
$ 
This is impossible, since $I$ is not finite. Hence $b \in U$ whenever $a \in U$. By the arbitrariness of $a$ and $b$, we conclude that $\tau^\prime=\tau_0$. The converse is false: given $U \in \tau\setminus \tau_0$ and $u \in U$, then the constant sequence $(u)$ is not $\mathcal{I}$-convergent to $\ell$ provided that $\ell\notin U$.
\end{example}

Other notions of convergence have been defined in literature, considering properties of subsequences of $x$ with sufficiently many elements. 
\begin{defi}\label{def:istarconverg}
Given a topological space $X$, a sequence 
$x=(x_n)$ is said to be $\mathcal{I}^\star$\emph{-convergent} to $\ell$, shortened with $x_n \to_{\mathcal{I}^\star} \ell$, whenever there exists a subsequence $(x_{n_k})$ such that $x_{n_k} \to \ell$ and $\{n_k: k \in \omega\} \in \mathcal{I}^\star$. 
Moreover, let $\Lambda_x(\mathcal{I})$ denote the set of $\mathcal{I}$\emph{-limit points} of $x$, that is, the set of all $\ell \in X$ such that there exists a subsequence $(x_{n_k})$ for which $x_{n_k} \to \ell$ and $\{n_k: k \in \omega\} \in \mathcal{I}^+$. 
\end{defi}

At this point, recall that an ideal $\mathcal{I}$ is a \emph{P-ideal} if it is $\sigma$-directed modulo finite sets, i.e., for every sequence $(A_n)$ of sets in $\mathcal{I}$ there exists $A \in \mathcal{I}$ such that $A_n\setminus A$ is finite for all $n$; equivalent definitions were given, e.g., in \cite[Proposition 1]{MR2285579}. 

Moreover, given infinite sets $A,B \subseteq \omega$ such that $A$ has canonical enumeration $\{a_n: n \in \omega\}$, we say that $\mathcal{I}$ a \emph{G-ideal} if 
$$
A_B:=\{a_b: b \in B\} \in \mathcal{I}^\star\,\,\,\text{ if and only if }\,\,\,B\in \mathcal{I}^\star
$$
provided that $A \in \mathcal{I}^\star$. This condition is strictly related to the so-called ``property (G)'' considered in \cite{MR3568092} and to the definition of invariant and thinnable ideals considered in \cite{Leo17, Leo17b}. Note that the class of G-ideals contains the ideals generated by $\alpha$-densities with $\alpha \ge -1$ (in particular, $\mathcal{I}_\mathrm{d}$ and the collection of logarithmic density zero sets), several summable ideals, and the \emph{P\'olya ideal}, i.e., 
$$
\mathcal{I}_{\mathfrak{p}}:=\left\{S\subseteq \omega: \mathfrak p^\star(S):=\lim_{s \to 1^-} \limsup_{n \to \infty} \frac{|S \cap [ns,n]|}{(1-s)n}=0\right\},
$$
see \cite[Section 2]{Leo17}. Among other things, the upper P\'olya density $\mathfrak p^\star$ has found a number of remarkable applications in analysis and economic theory, see e.g. \cite{MR1545027}, \cite{MR0003208} and \cite{MR1656470}.

In this regard, we have the following basic result: points \ref{item:basic1}-\ref{item:basic2} can be shown by routine arguments, cf. \cite[Theorem 3.1]{MR2904078} and \cite[Section 2]{MR2463821} (we omit details); although not explicit in the literature, point \ref{item:basic3} can be considered folklore, see \cite[Theorem 3.2]{MR1844385} for the case $X$ being a metric space (we include the proof here for the sake of completeness); lastly, point \ref{item:basic4} provides a new characterization of $\mathcal{I}$-convergence (related results can be found in \cite[Theorem 3.4]{MR3568092} and \cite[Theorem 3.4]{Leo17}).

\begin{thm}\label{lem:basic0}
Let $X$ be a topological space and $\mathcal{I}$ be an ideal. Then:
\begin{enumerate}[label={\rm (\roman{*})}]
\item \label{item:basic1} $\mathcal{I}$-limits and $\mathcal{I}^\star$-limits are unique, provided $X$ is Hausdorff;
\item \label{item:basic2} $\mathcal{I}^\star$-convergence implies $\mathcal{I}$-convergence;
\item \label{item:basic3} $\mathcal{I}$-convergence implies $\mathcal{I}^\star$-convergence, provided $X$ is first countable and $\mathcal{I}$ is a P-ideal;
\item \label{item:basic4} A sequence $(x_n) \in X^{\omega}$ is $\mathcal{I}$-convergent if and only if there exists an $\mathcal{I}$-convergent subsequence $(x_{n_k})$ such that $\{n_k: k \in \omega\} \in \mathcal{I}^\star$, provided $\mathcal{I}$ is a G-ideal.
\end{enumerate}
\end{thm}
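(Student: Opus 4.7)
The plan is to dispose of the forward direction trivially, and to prove the converse by a direct application of the G-ideal condition. If $x_n \to_\mathcal{I} \ell$, then taking $n_k := k$ provides an $\mathcal{I}$-convergent subsequence whose index set equals $\omega \in \mathcal{I}^\star$, so this direction requires no hypothesis on $\mathcal{I}$.

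For the converse, suppose there is a subsequence $(x_{n_k})_{k \in \omega}$ with $x_{n_k} \to_\mathcal{I} \ell$ (as a $k$-indexed sequence) and $A := \{n_k : k \in \omega\} \in \mathcal{I}^\star$. Since subsequences are by definition indexed by a strictly increasing function, $n_0 < n_1 < \cdots$ is precisely the canonical enumeration of $A$; that is, $a_k = n_k$ in the notation used to define G-ideals. Fix any neighborhood $U$ of $\ell$ and let $B := \{k \in \omega : x_{n_k} \in U\}$. The $\mathcal{I}$-convergence of $(x_{n_k})$ yields $B \in \mathcal{I}^\star$.

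Applying the G-ideal hypothesis to $A \in \mathcal{I}^\star$ and $B \in \mathcal{I}^\star$ then gives $A_B = \{n_k : k \in B\} \in \mathcal{I}^\star$. Since each element of $A_B$ is of the form $n_k$ with $k \in B$ and therefore satisfies $x_{n_k} \in U$, we obtain $A_B \subseteq \{n \in \omega : x_n \in U\}$, so the latter set lies in $\mathcal{I}^\star$ as well. By arbitrariness of $U$, this is precisely $\mathcal{I}$-convergence of $(x_n)$ to $\ell$.

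The only subtlety worth flagging is the identification of the canonical enumeration of $A$ with the index sequence $(n_k)$; once this is noted, the argument is a one-line appeal to the definition of G-ideal in the correct direction, and no hypothesis on $X$ (such as first countability or Hausdorffness) is used.
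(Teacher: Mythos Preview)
Your argument for part \ref{item:basic4} is correct. It differs from the paper's in the following way: the paper spends its effort on the \emph{forward} direction, fixing an arbitrary $I\in\mathcal{I}$, taking $(n_k)$ to enumerate $I^c$, and using the implication $A_B\in\mathcal{I}^\star\Rightarrow B\in\mathcal{I}^\star$ of the G-ideal condition to conclude that $(x_{n_k})$ is $\mathcal{I}$-convergent; the converse is then dismissed with ``can be shown similarly.'' You instead observe (rightly) that the forward direction is trivial via $n_k=k$, needing no hypothesis on $\mathcal{I}$ at all, and you write out the converse in full, using the \emph{other} implication $B\in\mathcal{I}^\star\Rightarrow A_B\in\mathcal{I}^\star$. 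Thus each proof exercises one half of the G-ideal biconditional, but on opposite directions of the theorem. What your approach buys is the observation that only the converse genuinely requires the G-ideal assumption; what the paper's approach buys is the slightly stronger statement that \emph{every} subsequence supported on a set of $\mathcal{I}^\star$ inherits $\mathcal{I}$-convergence, not merely some subsequence.
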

\begin{proof}
\ref{item:basic3} Let $(x_n)$ be a sequence taking values in $X$ which is $\mathcal{I}$-convergent to some $\ell \in X$. Then, let $(U_j)$ be a countable decreasing local base at $\ell$ and, for each $j$, define $A_j:=\{n: x_n\notin U_j\}$. Hence, $A_j \in \mathcal{I}$ for each $j$, $(A_j)$ is increasing, and, since $\mathcal{I}$ is a P-ideal, there exists $A \in \mathcal{I}$ such that $A_j\setminus A$ is finite for all $j$. Denoting by $(n_k)$ the increasing sequence of integers in $A^c$ (which belongs to $\mathcal{I}^\star$), it follows that $x_{n_k} \to \ell$. Indeed, letting $V$ be a neighborhood of $\ell$ and $j \in \omega$ such that $U_j\subseteq V$, then the finiteness of $\{k: x_{n_k}\notin V\}$ follows by the fact that it has the same cardinality of $\{n_k: x_{n_k}\notin V\}$ and
$
\{n_k: x_{n_k}\notin V\} \subseteq \{n_k: x_{n_k}\notin U_j\} \subseteq \{n \in A^c: x_n \notin U_j\} = A_j\setminus A. 
$

\ref{item:basic4} Let us suppose that $(x_n)$ is $\mathcal{I}$-convergent to $\ell \in X$. Fix also $I \in \mathcal{I}$ and let $(n_k)$ be the increasing enumeration of $I^c$. Then, it is claimed that the subsequence $(x_{n_k})$ is $\mathcal{I}$-convergent to $\ell$. Indeed, for each neighborhood $U$ of $\ell$, we have $\{n: x_n \notin U\} \in \mathcal{I}$ by hypothesis, hence 
$
\{n_k: x_{n_k} \in U\} = \{n: x_n \in U\}\setminus I=\omega\setminus (\{n: x_n \notin U\} \cup I) \in \mathcal{I}^\star. 
$ 
It follows by the fact that $\mathcal{I}$ is a G-ideal that $\{k: x_{n_k} \in U\} \in \mathcal{I}^\star$, that is, $x_{n_k} \to_{\mathcal{I}} \ell$. 
The converse can be shown similarly.
\end{proof}

It is well known that $\mathcal{Z}$ is a P-ideal (see e.g. \cite[Proposition 3.2]{MR632187}) and, as recalled before, it is also a G-ideal. Hence:%Accordingly, we easily obtain:
\begin{cor}\label{cor:cor1}
Let $(x_n)$ be a sequence taking values in a topological space $X$. Then the following are equivalent:
\begin{enumerate}[label={\rm (\roman{*})}]
\item \label{item:eq1} $(x_n)$ is statistically convergent;
\item \label{item:eq3} There exists a statistically convergent subsequence $(x_{n_k})$ with $\{n_k: k \in \omega\} \in \mathcal{Z}^\star$.
\end{enumerate}
If, in addition, $X$ is first countable, then they are also equivalent to:
\begin{enumerate}[label={\rm (\roman{*})}]
\setcounter{enumi}{2}
\item \label{item:eq2} There exists a convergent subsequence $(x_{n_k})$ with $\{n_k: k \in \omega\} \in \mathcal{Z}^\star$;
\end{enumerate}
\end{cor}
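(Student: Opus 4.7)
The proof is essentially a direct assembly of the ingredients already recorded in Theorem~\ref{lem:basic0}, once one notes that the ideal $\mathcal{Z}$ is simultaneously a P-ideal and a G-ideal (the former being classical, the latter remarked immediately before the corollary). The plan is therefore to identify each of the three conditions with a form of $\mathcal{I}$- or $\mathcal{I}^\star$-convergence for $\mathcal{I}=\mathcal{Z}$, and then to invoke the corresponding parts of Theorem~\ref{lem:basic0}.

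For the equivalence \ref{item:eq1}$\,\Leftrightarrow\,$\ref{item:eq3}, I would simply apply Theorem~\ref{lem:basic0}.\ref{item:basic4} to $\mathcal{I}=\mathcal{Z}$: this is legitimate since $\mathcal{Z}$ is a G-ideal, and \ref{item:basic4} is an ``if and only if'' statement whose two sides coincide literally with \ref{item:eq1} and \ref{item:eq3}. No extra work is required.

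For the remaining equivalence under the additional assumption that $X$ is first countable, I would observe that condition \ref{item:eq2} is, by Definition~\ref{def:istarconverg}, the statement that $(x_n)$ is $\mathcal{Z}^\star$-convergent. Then the implication \ref{item:eq2}$\,\Rightarrow\,$\ref{item:eq1} is immediate from Theorem~\ref{lem:basic0}.\ref{item:basic2}, while \ref{item:eq1}$\,\Rightarrow\,$\ref{item:eq2} follows from Theorem~\ref{lem:basic0}.\ref{item:basic3}, since $\mathcal{Z}$ is a P-ideal and $X$ is first countable. This closes the circle of equivalences.

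There is really no obstacle here: the content of the corollary is already packaged in Theorem~\ref{lem:basic0}, and the only thing to check is the bookkeeping that $\mathcal{Z}$ satisfies both the G-ideal and P-ideal hypotheses needed by parts \ref{item:basic4} and \ref{item:basic3} respectively. One might prefer to write the proof as a single sentence, noting that \ref{item:eq1}$\,\Rightarrow\,$\ref{item:eq2}$\,\Rightarrow\,$\ref{item:eq3}$\,\Rightarrow\,$\ref{item:eq1}, where the first arrow uses \ref{item:basic3}, the second is trivial (ordinary convergence implies $\mathcal{Z}$-convergence, cf. \ref{item:basic2} applied to $\mathrm{Fin}\subseteq \mathcal{Z}$), and the third uses \ref{item:basic4}; this avoids treating the two equivalences separately and makes the role of first countability transparent (it is needed only for the first arrow).
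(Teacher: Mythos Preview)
Your proposal is correct and matches the paper's approach exactly: the corollary is stated immediately after noting that $\mathcal{Z}$ is both a P-ideal and a G-ideal, with the implicit proof being precisely the application of Theorem~\ref{lem:basic0}.\ref{item:basic4} for \ref{item:eq1}$\Leftrightarrow$\ref{item:eq3} and of Theorem~\ref{lem:basic0}.\ref{item:basic2}--\ref{item:basic3} for the equivalence with \ref{item:eq2} in the first countable case. One minor quibble: in your alternative cycle, the step \ref{item:eq2}$\Rightarrow$\ref{item:eq3} is justified by the inclusion $\mathrm{Fin}\subseteq\mathcal{Z}$ rather than by Theorem~\ref{lem:basic0}.\ref{item:basic2} as you cite, but this is a harmless misattribution of a trivial fact.
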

It is worth noting that the equivalence between \ref{item:eq1} and \ref{item:eq2} can be already found in \cite[Theorem 2.2]{MR2463821}, cf. also \cite[Theorem 1]{MR816582} and \cite[Theorem 1]{MR1260176}. 

We obtain also an abstract version of \cite[Theorem 2.3]{MR954458}, see also \cite[Proposition 1]{MR2835960} and \cite[Theorem 1]{MR2334006}; the proof goes verbatim, hence we omit it.
\begin{cor}\label{cor:decomposition}
Let $\mathcal{I}$ be a P-ideal and $(x_n)$ be a sequence taking values in a 
%first countable topological 
metrizable %Szymon
group \textup{(}with identity $0$\textup{)} such that $x_n \to_\mathcal{I} \ell$. Then, there exist sequences $(y_n)$ and $(z_n)$ such that: $x_n=y_n+z_n$ for all $n$, $y_n \to \ell$, and $\{n\in \omega: z_n\neq 0\} \in \mathcal{I}$. % (in particular, $z_n \to_{\mathcal{I}^\star} 0$).
\end{cor}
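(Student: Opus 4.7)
The plan is to reduce the statement to Theorem~\ref{lem:basic0}\ref{item:basic3}. Since every metrizable space is first countable and $\mathcal{I}$ is a P-ideal by hypothesis, that theorem upgrades the $\mathcal{I}$-convergence of $(x_n)$ to $\mathcal{I}^\star$-convergence. Hence there is a subsequence $(x_{n_k})$ with $x_{n_k}\to\ell$ in the ordinary sense and with $A:=\{n_k:k\in\omega\}\in\mathcal{I}^\star$, i.e.\ with $A^c\in\mathcal{I}$. This set $A$ is the only ingredient needed for the decomposition.

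Next, I would split the sequence in the obvious way along the partition $\omega=A\sqcup A^c$. Define, for every $n\in\omega$,
\[
y_n:=\begin{cases} x_n & \text{if } n\in A,\\ \ell & \text{if } n\notin A,\end{cases}
\qquad
z_n:=x_n-y_n=\begin{cases} 0 & \text{if } n\in A,\\ x_n-\ell & \text{if } n\notin A.\end{cases}
\]
Then $x_n=y_n+z_n$ holds by construction, and $\{n\in\omega:z_n\neq 0\}\subseteq A^c\in\mathcal{I}$, giving two of the three required properties for free.

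It remains to check that $y_n\to\ell$. Fix a neighborhood $U$ of $\ell$. For $n\notin A$ we have $y_n=\ell\in U$ trivially, so $\{n\in\omega:y_n\notin U\}\subseteq A$. For $n=n_k\in A$, we have $y_{n_k}=x_{n_k}$, and since $x_{n_k}\to\ell$ ordinarily, the set $\{k\in\omega:x_{n_k}\notin U\}$ is finite; in particular $\{n\in A:y_n\notin U\}=\{n_k:x_{n_k}\notin U\}$ is finite. Hence $\{n:y_n\notin U\}$ is finite, i.e.\ $y_n\to\ell$.

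Note that the metrizable group structure is used only in a minimal way, namely so that the difference $x_n-\ell$ makes sense and first countability is available; continuity of the group operations is not actually needed for the argument. The only genuine content of the proof is Theorem~\ref{lem:basic0}\ref{item:basic3}, so there is no real obstacle: once the $\mathcal{I}^\star$-convergent subsequence has been produced, the splitting along $A$ and $A^c$ is forced and the three verifications are immediate.
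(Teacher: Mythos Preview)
Your argument is correct and is precisely the natural route: invoke Theorem~\ref{lem:basic0}\ref{item:basic3} to upgrade $\mathcal{I}$-convergence to $\mathcal{I}^\star$-convergence, then split along $A$ and $A^c$. The paper itself omits the proof, saying only that it ``goes verbatim'' from \cite[Theorem 2.3]{MR954458} (see also \cite{MR2835960, MR2334006}); those references carry out exactly the decomposition you wrote, so your proof is the intended one.
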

%It is worth noting that Corollary \ref{cor:decomposition} is also an extension of \cite[Theorem 2]{MR2002719}.

Recall that a real double sequence $x=(x_{n,m}: n,m \in\omega)$ has \emph{Pringsheim limit} $\ell$ provided that for every $\varepsilon>0$ there exists $k \in \omega$ such that $|x_{n,m}-\ell|<\varepsilon$ for all $n,m\ge k$. Identifying ideals on countable sets with ideals on $\omega$ through a fixed bijection, it is easily seen that this is equivalent to $x \to_{\mathcal{I}_{\mathrm{Pr}}} \ell$, where $\mathcal{I}_{\mathrm{Pr}}$ is the ideal defined by
$$
\mathcal{I}_{\mathrm{Pr}}:=\left\{A\subseteq \omega \times \omega: \limsup_{n\to \infty}\, \sup \left\{k: (n,k) \in A\right\}<\infty\right\}.
$$
Equivalently, $\mathcal{I}_{\mathrm{Pr}}$ is the ideal on $\omega \times \omega$ containing the complements of $[n,\infty)\times [n,\infty)$ for all $n \in \omega$. At this point, for each $n,m \in \omega$, let $\mu_{n,m}$ be the uniform probability measure on $\{1,\ldots,n\}\times \{1,\ldots,m\}$ and define the ideal
$$
\mathcal{Z}_{\mathrm{Pr}}:=\left\{A\subseteq \omega\times \omega: \mu_{n,m}(A) \to_{\mathcal{I}_{\mathrm{Pr}}} 0\right\}.
$$
Note that $\mathcal{I}_{\mathrm{Pr}}\subseteq \mathcal{Z}_{\mathrm{Pr}}$ and that $\mathcal{Z}_{\mathrm{Pr}}$ is a P-ideal. The notion of convergence of real double sequences $(x_{n,m})$ with respect to the ideal $\mathcal{Z}_{\mathrm{Pr}}$ has been recently introduced in \cite{MR2002719, MR2019757}; here, it has been simply defined ``statistical convergence'' of double sequences. Accordingly, it has been shown in \cite[Theorem 2]{MR2002719} that a real double sequence $(x_{n,m})$ is statistically convergent to $\ell$ if and only if there exist real double sequences $(y_{n,m})$ and $(z_{n,m})$ such that $y_{n,m}\to_{\mathcal{I}_{\mathrm{Pr}}} \ell$ and $\{(n,m): z_{n,m}\neq 0\}\in \mathcal{Z}_{\mathrm{Pr}}$. However, this is an immediate consequence of Corollary \ref{cor:decomposition}.

%%%%%%%%%%%%%%%%%%%%%%%%%%%%%%%%%%%%%%%%%%%%%%%%%%%%%%%%%%%%%%%%%%%%%%%%%%%%%%%%%%%%%%%%%%%%%%%%%%%
\section{Ideal Cluster points}\label{sec:cluster}

Given sequences $x$ and $y$ taking values in a topological space $X$, we say that they are $\mathcal{I}$\emph{-equivalent}, shortened with $x\equiv_\mathcal{I} y$, if $\{n: x_n \neq y_n\} \in \mathcal{I}$ (it is easy to see that $\equiv_\mathcal{I}$ is an equivalence relation). 
The following lemmas, which collect and extend several results contained in \cite{MR2463821, MR1181163, MR1844385}, show some standard properties of $\mathcal{I}$-cluster and $\mathcal{I}$-limit points.
\begin{lem}\label{lem:basic}
Let $x$ and $y$ be sequences taking values in a topological space $X$ and fix ideals $\mathcal{I}\subseteq \mathcal{J}$. Then:
\begin{enumerate}[label={\rm (\roman{*})}]%\textsc{l}\arabic
\item \label{item:1} $\Lambda_x(\mathcal{J}) \subseteq \Lambda_x(\mathcal{I})$ and $\Gamma_x(\mathcal{J}) \subseteq \Gamma_x(\mathcal{I})$;
\item \label{item:2} $\Lambda_x(\mathrm{Fin}) = \Gamma_x(\mathrm{Fin})$, provided $X$ is first countable;
\item \label{item:3} $\Lambda_x(\mathcal{I}) \subseteq \Gamma_x(\mathcal{I})$;
\item \label{item:4} $\Gamma_x(\mathcal{I})$ is closed;
\item \label{item:5} $\Lambda_x(\mathcal{I})=\Lambda_y(\mathcal{I})$ and $\Gamma_x(\mathcal{I})=\Gamma_y({\mathcal{I}})$ provided $x\equiv_{\mathcal{I}} y$;
\item \label{item:6} $\Gamma_x(\mathcal{I}) \cap K \neq \emptyset$, provided $K\subseteq X$ is compact and $\{n: x_n \in K\} \in \mathcal{I}^+$;
\item \label{item:7} $\Lambda_x(\mathcal{I}) = \Gamma_x(\mathcal{I})=\{\ell\}$ provided $x_n \to_{\mathcal{I}^\star} \ell$ and $X$ is Hausdorff.
\end{enumerate} 
\end{lem}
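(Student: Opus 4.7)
The plan is to prove the seven assertions mostly by chasing definitions, with the key observation being that $\mathcal{I}\subseteq\mathcal{J}$ implies $\mathcal{J}^{+}\subseteq\mathcal{I}^{+}$ (since $A\notin\mathcal{J}$ forces $A\notin\mathcal{I}$). Part \ref{item:1} then drops out immediately for both sets: any witness subsequence index set or any preimage set lying in $\mathcal{J}^{+}$ lies in $\mathcal{I}^{+}$. Part \ref{item:3} proceeds by noting that if $(x_{n_k})\to\ell$ with $\{n_k\}\in\mathcal{I}^{+}$, then for each neighborhood $U$ of $\ell$ the set $\{n_k : x_{n_k}\in U\}$ differs from $\{n_k : k\in\omega\}$ by a finite set, hence still lies in $\mathcal{I}^{+}$ (as $\mathrm{Fin}\subseteq\mathcal{I}$), and it is a subset of $\{n : x_n\in U\}$. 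Part \ref{item:2} then reduces to the reverse inclusion $\Gamma_x(\mathrm{Fin})\subseteq\Lambda_x(\mathrm{Fin})$: given $\ell\in\Gamma_x(\mathrm{Fin})$ and a countable decreasing local base $(U_j)$ at $\ell$, one recursively picks $n_1<n_2<\cdots$ with $x_{n_k}\in U_k$, which is possible because each $\{n : x_n\in U_k\}$ is infinite, producing an ordinary convergent subsequence.

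For \ref{item:4}, I would show $X\setminus\Gamma_x(\mathcal{I})$ is open: if $\ell\notin\Gamma_x(\mathcal{I})$, choose an \emph{open} neighborhood $U$ of $\ell$ with $\{n : x_n\in U\}\in\mathcal{I}$; then $U$ is itself a neighborhood of every $\ell'\in U$, so the same membership relation disqualifies every such $\ell'$, giving $U\subseteq X\setminus\Gamma_x(\mathcal{I})$. Part \ref{item:5} uses the symmetric difference estimate $\{n: x_n\in U\}\triangle\{n: y_n\in U\}\subseteq D$, where $D:=\{n: x_n\neq y_n\}\in\mathcal{I}$; this gives $\Gamma_x(\mathcal{I})=\Gamma_y(\mathcal{I})$ at once. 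For the corresponding $\Lambda$-equality, given a subsequence $(x_{n_k})$ converging to $\ell$ with $\{n_k\}\in\mathcal{I}^{+}$, restrict to indices in $\{n_k\}\setminus D\in\mathcal{I}^{+}$: on this set $x_n=y_n$, so the restricted subsequence of $y$ still converges to $\ell$ and witnesses $\ell\in\Lambda_y(\mathcal{I})$.

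Part \ref{item:6} is a standard compactness argument by contradiction: if $\Gamma_x(\mathcal{I})\cap K=\emptyset$, for each $\ell\in K$ pick an open neighborhood $U_\ell$ with $\{n: x_n\in U_\ell\}\in\mathcal{I}$, extract a finite subcover $U_{\ell_1},\dots,U_{\ell_m}$ of $K$, and conclude
\[
\{n : x_n\in K\}\subseteq\bigcup_{i=1}^{m}\{n : x_n\in U_{\ell_i}\}\in\mathcal{I},
\]
contradicting the hypothesis. For \ref{item:7}, the existence witness for $\mathcal{I}^\star$-convergence is an $A\in\mathcal{I}^\star\subseteq\mathcal{I}^{+}$ along which $(x_n)_{n\in A}\to\ell$, which places $\ell\in\Lambda_x(\mathcal{I})\subseteq\Gamma_x(\mathcal{I})$ by \ref{item:3}. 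For the singleton statement, invoke Theorem \ref{lem:basic0}\ref{item:basic2} to upgrade $\mathcal{I}^\star$-convergence to $\mathcal{I}$-convergence, and then for any candidate $\ell'\neq\ell$ separate $\ell$ and $\ell'$ by disjoint open sets $U,V$ using Hausdorffness; the inclusion $\{n: x_n\in V\}\subseteq\{n: x_n\notin U\}\in\mathcal{I}$ rules out $\ell'\in\Gamma_x(\mathcal{I})$, and \emph{a fortiori} $\ell'\notin\Lambda_x(\mathcal{I})$ by \ref{item:3}.

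None of these steps looks difficult in isolation; the only mild subtlety is the $\Lambda$-part of \ref{item:5}, where one must make sure that removing the $\mathcal{I}$-small set $D$ from the index set $\{n_k\}$ preserves membership in $\mathcal{I}^{+}$ and still yields an ordinarily convergent subsequence of $y$. Everything else is bookkeeping with the definitions of $\mathcal{I}^{+}$, $\mathcal{I}^{\star}$, and first countability.
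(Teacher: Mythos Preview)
Your proof is correct and follows essentially the same approach as the paper's: definition-chasing for \ref{item:1}--\ref{item:5}, Hausdorff separation for \ref{item:7}, and a finite-subcover compactness argument for \ref{item:6} (which the paper handles by citation to \cite{MR2923430} rather than writing it out). The only cosmetic difference is in \ref{item:4}, where you show the complement is open while the paper shows $\Gamma_x(\mathcal{I})$ contains its accumulation points; these are dual formulations and equally elementary.
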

\begin{proof}
\ref{item:1} and \ref{item:2} easily follow from the definitions. 
In addition, \ref{item:3} is obvious if $\Lambda_x(\mathcal{I})=\emptyset$. Otherwise, fix $\ell \in \Lambda_x(\mathcal{I})$ and a neighborhood $U$ of $\ell$. Then, there exists an increasing subsequence $(n_k)$ with $\{n_k\}\in\mathcal{I}^+$ such that $x_{n_k} \to \ell$, so that $S:=\{n_k: x_{n_k} \notin U\}$ is finite. This implies that $\{n_k\} \setminus S\subseteq \{n: x_n \in U\}$. To conclude, it is sufficient to note that $\{n_k\}\setminus S \notin \mathcal{I}$, therefore $\{n: x_n \in U\}\in \mathcal{I}^+$.

Similarly, \ref{item:4} is clear if $\Gamma_x(\mathcal{I})=\emptyset$. In the opposite, let $y$ be an accumulation point of $\Gamma_x(\mathcal{I})$ and $U$ a neighborhood of $y$. Then, there exists $z \in \Gamma_x(\mathcal{I}) \cap U$. Let $V$ be a neighborhood of $z$ contained in $U$. Considering that $\{n: x_n \in V\} \subseteq \{n: x_n \in U\}$ and $\{n: x_n\in V\} \in \mathcal{I}^+$, we conclude that $y \in \Gamma_x(\mathcal{I})$.

To prove \ref{item:5}, fix $\ell \in \Lambda_x(\mathcal{I})$, so that there exists a subsequence $(x_{n_k})$ such that $\{n_k\}\in \mathcal{I}^+$ and $x_{n_k} \to \ell$. Since $\{n: x_n\neq y_n\} \in \mathcal{I}$ and $\{n_k: x_{n_k} \neq y_{n_k}\} \subseteq \{n: x_n\neq y_n\}$, then $S:=\{n_k: x_{n_k} = y_{n_k}\} \in \mathcal{I}^+$. Denoting by $(s_n)$ the canonical enumeration of $S$, we obtain $y_{s_n} \to \ell$, hence $\ell \in \Lambda_y({\mathcal{I}})$. By the arbitrariness of $\ell$, we have $\Lambda_x(\mathcal{I})\subseteq \Lambda_y({\mathcal{I}})$ therefore, by symmetry, $\Lambda_x(\mathcal{I})=\Lambda_y({\mathcal{I}})$. 
The other claim can be shown similarly.

The proof of \ref{item:6} can be found in \cite[Theorem 6]{MR2923430}, cf. also \cite[Theorem 2.14]{MR2463821} for the case $\mathcal{I}=\mathcal{Z}$.

Lastly, suppose that $x_n \to_{\mathcal{I}^\star} \ell$ so that $x_n \to_{\mathcal{I}} \ell$ by Theorem \ref{lem:basic0}.\ref{item:basic2} and, in particular, $\ell \in \Lambda_x(\mathcal{I})$. Also, thanks to \ref{item:3}, we have $\{\ell\}\subseteq \Lambda_x(\mathcal{I}) \subseteq \Gamma_x(\mathcal{I})$. To conclude, let us suppose for the sake of contradition that there exists an $\mathcal{I}$-cluster point $\ell^\prime$ of $x$ different from $\ell$. Fix disjoint neighborhoods $U$ and $U^\prime$ of $\ell$ and $\ell^\prime$, respectively. On the one hand, since $\ell^\prime$ is a $\mathcal{I}$-cluster point, then $\{n:x_n \in U^\prime\} \in \mathcal{I}^+$. On the other hand, this is impossible since $\{n:x_n \in U^\prime\} \subseteq \{n: x_n\notin U\} \in \mathcal{I}$. This proves \ref{item:7}.
\end{proof}

It follows at once from Theorem \ref{lem:basic0}.\ref{item:basic3} and Lemma \ref{lem:basic}.\ref{item:7} that:
\begin{cor}\label{lem:ilimit}
Let $\mathcal{I}$ be a P-ideal and $(x_n)$ be a sequence taking values in a first countable Hausdorff space such that $x_n \to_\mathcal{I} \ell$. Then $\Lambda_x(\mathcal{I}) = \Gamma_x(\mathcal{I})=\{\ell\}$.
\end{cor}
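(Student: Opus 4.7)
The plan is essentially to chain together the two results the corollary advertises. First, I will invoke Theorem \ref{lem:basic0}.\ref{item:basic3}: since $X$ is first countable and $\mathcal{I}$ is a P-ideal, the hypothesis $x_n \to_{\mathcal{I}} \ell$ upgrades to $x_n \to_{\mathcal{I}^\star} \ell$. This is the only place first countability and the P-ideal assumption will be used; their sole job is to pass from ideal to ideal-star convergence.

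Second, with $\mathcal{I}^\star$-convergence in hand and $X$ Hausdorff, I will apply Lemma \ref{lem:basic}.\ref{item:7} directly, which yields the conclusion $\Lambda_x(\mathcal{I}) = \Gamma_x(\mathcal{I}) = \{\ell\}$. Both inclusions and the singleton identification are already packaged inside that lemma (the part $\{\ell\} \subseteq \Lambda_x(\mathcal{I}) \subseteq \Gamma_x(\mathcal{I})$ uses \ref{item:basic2} and \ref{item:3} of Lemma \ref{lem:basic}, while the reverse inclusion $\Gamma_x(\mathcal{I}) \subseteq \{\ell\}$ uses Hausdorffness via disjoint neighborhoods).

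There is no genuine obstacle here: the argument is a two-line deduction, and the only thing to verify is that the hypotheses of the two cited results are met (they are, verbatim). The proof can therefore be stated in one sentence, and I would write it as follows: by Theorem \ref{lem:basic0}.\ref{item:basic3}, $x_n \to_{\mathcal{I}^\star} \ell$, and then Lemma \ref{lem:basic}.\ref{item:7} gives $\Lambda_x(\mathcal{I}) = \Gamma_x(\mathcal{I}) = \{\ell\}$.
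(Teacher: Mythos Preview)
Your proposal is correct and matches the paper's own argument exactly: the paper states that the corollary ``follows at once from Theorem \ref{lem:basic0}.\ref{item:basic3} and Lemma \ref{lem:basic}.\ref{item:7},'' which is precisely the two-step chain you describe.
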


The converse of Corollary \ref{lem:ilimit} does not hold in general: the real sequence $x$ defined by $x_n=n$ if $n$ is even and $x_n=0$ otherwise satisfies $\Lambda_x(\mathcal{Z}) = \Gamma_x(\mathcal{Z})=\{0\}$ while $x_n \not\to_{\mathcal{Z}} 0$. On the other hand, if the underlying space space is compact, it is sufficient, cf. \cite[Proposition 8]{MR1757066} for a special case.
\begin{lem}\label{lem:converseabovelemmaconvergence}
Let $\mathcal{I}$ be an ideal, let $(x_n)$ be a sequence in a first countable compact space $X$, and suppose that $\Gamma_x(\mathcal{I})=\{\ell\}$. Then $x_n \to_{\mathcal{I}} \ell$. In addition, if $\mathcal{I}$ is a P-ideal, then $x_n \to_{\mathcal{I}^\star} \ell$.
\end{lem}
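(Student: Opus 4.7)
The plan is to handle the two assertions in sequence: first establish $\mathcal{I}$-convergence by a short contradiction argument based on Lemma \ref{lem:basic}.\ref{item:6}, and then read off $\mathcal{I}^\star$-convergence directly from Theorem \ref{lem:basic0}.\ref{item:basic3}.

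For the first assertion, I would argue by contradiction. Suppose $x_n \not\to_{\mathcal{I}} \ell$. By Definition \ref{iconverg}, there exists an open neighborhood $U$ of $\ell$ such that $\{n : x_n \in U\} \notin \mathcal{I}^\star$; equivalently, $A := \{n : x_n \notin U\} \in \mathcal{I}^+$. Set $K := X \setminus U$. Since $X$ is compact and $K$ is closed in $X$, the set $K$ is compact. Moreover, $\{n : x_n \in K\} = A \in \mathcal{I}^+$. Applying Lemma \ref{lem:basic}.\ref{item:6} to the compact set $K$, we obtain some $\ell' \in \Gamma_x(\mathcal{I}) \cap K$. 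Since $\ell \in U$ and $U \cap K = \emptyset$, we have $\ell' \neq \ell$, contradicting the hypothesis $\Gamma_x(\mathcal{I}) = \{\ell\}$.

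For the second assertion, the first part of the lemma has already given $x_n \to_{\mathcal{I}} \ell$. Since, by assumption, $X$ is first countable and $\mathcal{I}$ is a P-ideal, Theorem \ref{lem:basic0}.\ref{item:basic3} immediately upgrades this to $x_n \to_{\mathcal{I}^\star} \ell$.

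There is no real obstacle here; the whole argument rests on observing that, in a compact space, the complement of any open neighborhood of $\ell$ is itself compact, so Lemma \ref{lem:basic}.\ref{item:6} can be deployed to manufacture a spurious cluster point. I would also remark in the writeup that first countability is not actually used in the first assertion, so mere compactness of $X$ suffices for the conclusion $x_n \to_{\mathcal{I}} \ell$; the hypothesis of first countability is only invoked in the final sentence, together with the P-ideal assumption, to pass from $\mathcal{I}$- to $\mathcal{I}^\star$-convergence.
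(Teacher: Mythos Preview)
Your proof is correct, but it takes a different route from the paper's own argument. For the first assertion the paper does not argue by contradiction via Lemma~\ref{lem:basic}.\ref{item:6}; instead it works directly: for each $z\neq\ell$ it picks a neighborhood $U_z$ with $\{n:x_n\in U_z\}\in\mathcal{I}$ (using that $z$ is not an $\mathcal{I}$-cluster point), covers $X$ by these together with a basic neighborhood $U_k$ of $\ell$, extracts a finite subcover by compactness, and concludes that $\{n:x_n\notin U_k\}$ lies in a finite union of sets in $\mathcal{I}$. For the second assertion the paper also argues from scratch, applying the P-ideal property to the increasing sequence $A_k=\{n:x_n\notin U_k\}$ rather than quoting Theorem~\ref{lem:basic0}.\ref{item:basic3}.

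Your approach is more modular: it simply cashes in two results already established in the paper (Lemma~\ref{lem:basic}.\ref{item:6} and Theorem~\ref{lem:basic0}.\ref{item:basic3}), which makes the write-up shorter and avoids repeating the compactness and P-ideal machinery. The paper's approach is more self-contained at this spot but essentially reproves those ingredients in situ. Your closing observation that first countability is unused in the first assertion is also correct; the paper's introduction of a countable local base $(U_k)$ in that part is cosmetic, since the same covering argument works for an arbitrary open neighborhood of $\ell$.
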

\begin{proof}
Let $(U_k)$ be a decreasing local base at $\ell$. Fix $k \in \omega$ and, for each $z \in X$ with $z\neq \ell$, there exists a neighborhood $U_z$ of $z$ such that $\{n \in \omega: x_n \in U_z\} \in \mathcal{I}$. Since $\{U_z: z \in X\setminus \{\ell\}\} \cup U_k$ is an open cover of $X$ and $X$ is compact, there exists a finite subcover $U_{z_1}\cup \cdots \cup U_{z_m} \cup U_k$; note that $U_k$ belongs to the subcover, indeed, in the opposite, we would have $\omega=\bigcup_{i\le m}\{n: x_n \in U_{z_i}\} \in \mathcal{I}$. In particular, $\{n \in \omega: x_n \in U_k\} \in \mathcal{I}^\star$. Therefore $x_n \to_{\mathcal{I}} \ell$.

If, in addition, $\mathcal{I}$ is a P-ideal then $A_k:=\{n\in \omega: x_n \notin U_k\}$ is an increasing sequence in $\mathcal{I}$, hence there exists $A \in \mathcal{I}$ such that $A_k\setminus A \in \mathrm{Fin}$ for all $k$. It follows that $\{n \in A^c: x_n \notin U_k\}=A_k \cap A^c \in \mathrm{Fin}$ for all $k$, that is, $x_n \to_{\mathcal{I}^\star} \ell$.
\end{proof}

As an application, we obtain a generalization of \cite[Theorem 3]{MR1416085}:
\begin{cor}
Let $\mathcal{I}$ be an ideal and $(x_n)$ be a sequence in first countable space $X$ such that $\{n \in \omega: x_n \notin K\} \in \mathcal{I}$ for some compact $K\subseteq X$. Then $x_n \to_{\mathcal{I}} \ell$ if and only if $\Gamma_x(\mathcal{I})=\{\ell\}$.
\end{cor}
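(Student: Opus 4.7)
The plan is to reduce to Lemma~\ref{lem:converseabovelemmaconvergence} by replacing $(x_n)$ with an $\mathcal{I}$-equivalent sequence that lives entirely in $K$, and then to invoke the subspace topology on $K$.

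For the nontrivial direction ``$\Gamma_x(\mathcal{I})=\{\ell\}\Rightarrow x_n\to_\mathcal{I}\ell$'', I would first apply Lemma~\ref{lem:basic}.\ref{item:6} to the compact set $K$ together with $\{n:x_n\in K\}\in\mathcal{I}^\star\subseteq\mathcal{I}^+$ to obtain $\emptyset\neq\Gamma_x(\mathcal{I})\cap K$, whence $\ell\in K$. I would then define $y_n:=x_n$ when $x_n\in K$ and $y_n:=\ell$ otherwise. Since $\{n:x_n\neq y_n\}\subseteq\{n:x_n\notin K\}\in\mathcal{I}$, we have $x\equiv_\mathcal{I} y$, so Lemma~\ref{lem:basic}.\ref{item:5} gives $\Gamma_y(\mathcal{I})=\{\ell\}$ in $X$. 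Viewed as a sequence in the first countable compact subspace $K$, and using that all $y_n$ lie in $K$ and that $K$-neighborhoods of any point of $K$ are restrictions of $X$-neighborhoods, the $\mathcal{I}$-cluster points of $y$ computed in $K$ coincide with $\Gamma_y(\mathcal{I})\cap K=\{\ell\}$. Lemma~\ref{lem:converseabovelemmaconvergence} applied inside $K$ then yields $y_n\to_\mathcal{I}\ell$; this lifts unchanged to $X$ (same neighborhood test), and $\mathcal{I}$-equivalence transports the conclusion back to $(x_n)$ via the inclusion $\{n:x_n\notin U\}\subseteq\{n:y_n\notin U\}\cup\{n:x_n\neq y_n\}\in\mathcal{I}$ for every neighborhood $U$ of $\ell$.

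The converse is routine: $x_n\to_\mathcal{I}\ell$ trivially puts $\ell$ in $\Gamma_x(\mathcal{I})$, and any putative second cluster point $\ell'\neq\ell$ admits, under the implicit Hausdorff assumption, neighborhoods $V$ of $\ell'$ and $U$ of $\ell$ with $V\cap U=\emptyset$, giving $\{n:x_n\in V\}\subseteq\{n:x_n\notin U\}\in\mathcal{I}$, contradicting $\ell'\in\Gamma_x(\mathcal{I})$. The main obstacle is the topological bookkeeping at the reduction step, namely checking that passing to the $\mathcal{I}$-equivalent modification $y$ and then viewing $K$ as the ambient space preserves both $\mathcal{I}$-cluster points and $\mathcal{I}$-convergence to $\ell$; once that is in place, Lemma~\ref{lem:converseabovelemmaconvergence} does the actual work.
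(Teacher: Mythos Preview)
Your proposal is correct and is precisely the natural elaboration of what the paper leaves implicit: the paper gives no proof of this corollary, treating it as an immediate application of Lemma~\ref{lem:converseabovelemmaconvergence}, and your reduction---replacing $x$ by an $\mathcal{I}$-equivalent $K$-valued sequence via Lemma~\ref{lem:basic}.\ref{item:5} and then working in the compact first countable subspace $K$---is exactly how one fills in the details. Your observation that the converse direction requires a Hausdorff assumption (absent from the corollary's statement but present in the analogous Lemma~\ref{lem:basic}.\ref{item:7} and Corollary~\ref{lem:ilimit}) is well taken; this appears to be a minor omission in the paper.
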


Moreover, Lemma \ref{lem:basic}.\ref{item:5} can be strenghtened if $X$ is a topological group:
\begin{lem}\label{lem:strengtened}
Let $x$ and $y$ be sequences taking values in a topological group $X$ \textup{(}written additively, with identity $0$\textup{)} and fix an ideal $\mathcal{I}$. Then:
\begin{enumerate}[label={\rm (\roman{*})}]
\item \label{item:lem1} $\Gamma_x(\mathcal{I})=\Gamma_y(\mathcal{I})$ provided $x_n-y_n \to_{\mathcal{I}} 0$; 
\item \label{item:lem2} $\Lambda_x(\mathcal{I})=\Lambda_y(\mathcal{I})$ provided $x_n-y_n \to_{\mathcal{I}^\star} 0$. 
\end{enumerate}
\end{lem}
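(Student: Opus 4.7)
The plan is to prove each equality by establishing one inclusion; equality then follows from the symmetry of the hypothesis, since $x_n-y_n\to_{\mathcal{I}} 0$ is equivalent to $y_n-x_n\to_{\mathcal{I}} 0$ in a topological group (and similarly for $\mathcal{I}^\star$-convergence, because the image of the $\mathcal{I}^\star$-subsequence under the continuous map $t\mapsto -t$ still converges to $0$).

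For \ref{item:lem1}, I fix $\ell\in\Gamma_x(\mathcal{I})$ and a neighborhood $U$ of $\ell$; the aim is to show $\{n:y_n\in U\}\in\mathcal{I}^+$. Using continuity of addition twice in the topological group $X$, I choose a symmetric neighborhood $W$ of $0$ with $\ell+W+W\subseteq U$. Then $A:=\{n:x_n\in\ell+W\}$ lies in $\mathcal{I}^+$ because $\ell\in\Gamma_x(\mathcal{I})$, while $B:=\{n:x_n-y_n\in W\}$ lies in $\mathcal{I}^\star$ because $x_n-y_n\to_\mathcal{I} 0$. Since $A\setminus B\subseteq B^c\in\mathcal{I}$, the intersection $A\cap B$ is in $\mathcal{I}^+$ (otherwise $A=(A\cap B)\cup(A\setminus B)\in\mathcal{I}$). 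For every $n\in A\cap B$, using $W=-W$,
\[
y_n=x_n-(x_n-y_n)\in(\ell+W)-W=\ell+W+W\subseteq U,
\]
so $\{n:y_n\in U\}\supseteq A\cap B\in\mathcal{I}^+$, as required.

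For \ref{item:lem2}, I fix $\ell\in\Lambda_x(\mathcal{I})$, so that there is a strictly increasing $(n_k)$ with $N:=\{n_k:k\in\omega\}\in\mathcal{I}^+$ and $x_{n_k}\to\ell$ in the ordinary sense. The hypothesis $x_n-y_n\to_{\mathcal{I}^\star}0$ provides $M\in\mathcal{I}^\star$ such that the subsequence of $x_n-y_n$ indexed by $M$ tends to $0$. The same set-theoretic observation as in \ref{item:lem1} shows $N':=N\cap M\in\mathcal{I}^+$. Enumerating $N'$ increasingly, the resulting sub-subsequence of $(x_{n_k})$ still converges to $\ell$, and the corresponding sub-subsequence of $(x_n-y_n)$ still converges to $0$; by continuity of subtraction, $y_n\to\ell$ along $N'$. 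Hence $\ell\in\Lambda_y(\mathcal{I})$.

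The only mildly delicate point is the topological-group step in \ref{item:lem1}, namely producing a symmetric $W$ with $\ell+W+W\subseteq U$: pick $V$ open with $\ell\in V\subseteq U$, then a neighborhood $W_0$ of $0$ with $W_0+W_0\subseteq V-\ell$, and set $W:=W_0\cap(-W_0)$. Everything else is a routine combination of the definitions of $\Gamma$ and $\Lambda$ with the basic fact that an $\mathcal{I}$-positive set intersected with an $\mathcal{I}^\star$ set remains $\mathcal{I}$-positive.
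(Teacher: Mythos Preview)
Your proof is correct and follows essentially the same approach as the paper: for \ref{item:lem1} you use continuity of addition to produce neighborhoods so that the intersection of an $\mathcal{I}^+$ set with an $\mathcal{I}^\star$ set witnesses $\ell\in\Gamma_y(\mathcal{I})$, and for \ref{item:lem2} you intersect the $\mathcal{I}^+$ index set of the convergent subsequence with the $\mathcal{I}^\star$ set from the hypothesis and use continuity of subtraction. The only cosmetic difference is that the paper writes the neighborhood condition as $V+W\subseteq U$ (with $V$ a neighborhood of $\ell$) rather than your $\ell+W+W\subseteq U$ with symmetric $W$; your handling of \ref{item:lem2} is in fact slightly more careful than the paper's, which inadvertently writes $A\in\mathcal{I}^\star$ where $A\in\mathcal{I}^+$ is what the definition of $\Lambda_x(\mathcal{I})$ actually gives.
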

\begin{proof} Let $z$ be the sequence defined by $z_n=x_n-y_n$.

\ref{item:lem1} It follows by hypothesis $z_n \to_{\mathcal{I}} 0$ and $-z_n \to_{\mathcal{I}} 0$. Fix $\ell \in \Gamma_x(\mathcal{I})$ and let $U$ be a neighborhood of $\ell$. By the continuity of the operation of the group, there exist neighborhoods $V$ and $W$ of $\ell$ and $0$, respectively, such that $V+W \subseteq U$. Considering that $\{n: x_n \in V\} \in \mathcal{I}^+$ and $\{n: -z_n \in W\} \in \mathcal{I}^\star$, it follows that
$$
\{n: y_n \in U\}=\{n: x_n-z_n \in U\} \supseteq \{n: x_n \in V\} \cap \{n: -z_n \in W\} \in \mathcal{I}^+.
$$
Since $\ell$ and $U$ were arbitrarily chosen, then $\Gamma_x(\mathcal{I})\subseteq \Gamma_y(\mathcal{I})$. The opposite inclusion can be shown similarly. 

\ref{item:lem2} By hypothesis $z_n \to_{\mathcal{I}^\star} 0$ and $-z_n \to_{\mathcal{I}^\star} 0$. Fix $\ell \in \Lambda_x(\mathcal{I})$, hence there exist $A,B \in \mathcal{I}^\star$ such that $\lim_{a \in A}x_a=\ell$ and $\lim_{b \in B}-z_b=0$. Setting $C:=A\cap B \in \mathcal{I}^\star$, it follows that $\lim_{c \in C}y_c=\lim_{c \in C}x_c-z_c=\ell$, therefore $\Lambda_x(\mathcal{I})\subseteq \Lambda_y(\mathcal{I})$. The opposite inclusion can be shown similarly.
\end{proof}

We recall that, under suitable assumptions on $X$ and $\mathcal{I}$, the collection of $\mathcal{I}$-cluster and $\mathcal{I}$-limit point sets can be characterized as the closed sets and $F_\sigma$ sets, respectively; see \cite[Theorem 3.1]{PaoloMarek17}, \cite[Section 2]{MR2463821}, \cite[Theorem 1.1]{MR1838788}, and \cite[Section 4]{MR1844385}. Moreover, the continuity of the map $x\mapsto \Gamma_x(\mathcal{I})$ has been investigated in \cite{MR1838788}.%, MR2177419

The next result establishes a connection between sets of cluster points with respect to different ideals (the proof is based on \cite[Theorem 2]{MR1181163} which focuses on the case $X=\mathbf{R}$, $\mathcal{I}=\mathcal{Z}$, and $\mathcal{J}=\mathrm{Fin}$).
\begin{lem}\label{thm:Clrelation}
Let $x$ be a sequence taking values in a strongly Lindel\"{o}f space $X$ and fix ideals $\mathcal{J}\subseteq \mathcal{I}$ such that $\mathcal{I}$ is a P-ideal. Then, there exists an $\mathcal{I}$-equivalent sequence $y$ such that $\Gamma_x(\mathcal{I})=\Gamma_y(\mathcal{J})$ and $\{y_n: n \in \omega\}\subseteq \{x_n: n \in \omega\}$.
\end{lem}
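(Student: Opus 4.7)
The plan is to produce $A \in \mathcal{I}$ and alter $x$ only at indices in $A$ to obtain $y$ with $x \equiv_\mathcal{I} y$; the inclusion $\Gamma_x(\mathcal{I}) \subseteq \Gamma_y(\mathcal{J})$ will then follow automatically, since Lemma \ref{lem:basic}.\ref{item:5} gives $\Gamma_y(\mathcal{I}) = \Gamma_x(\mathcal{I})$ and Lemma \ref{lem:basic}.\ref{item:1} gives $\Gamma_y(\mathcal{I}) \subseteq \Gamma_y(\mathcal{J})$ (because $\mathcal{J} \subseteq \mathcal{I}$). Thus the real work is to force the reverse inclusion $\Gamma_y(\mathcal{J}) \subseteq \Gamma_x(\mathcal{I})$ by designing the modification so that each $\ell$ outside $\Gamma_x(\mathcal{I})$ acquires an entire neighborhood meeting $y$ on a $\mathcal{J}$-small (in fact finite) set of indices.

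To build $A$, set $F := \Gamma_x(\mathcal{I})$, closed by Lemma \ref{lem:basic}.\ref{item:4}. For each $\ell \in X \setminus F$ I choose an open neighborhood $U_\ell$ with $\{k : x_k \in U_\ell\} \in \mathcal{I}$, and apply the strongly Lindel\"{o}f hypothesis to the open subspace $X \setminus F$ to extract a countable subcover $(U_n)_{n \in \omega}$ from the family $\{U_\ell\}$. Writing $A_n := \{k : x_k \in U_n\} \in \mathcal{I}$, the P-ideal property supplies $A \in \mathcal{I}$ such that $A_n \setminus A$ is finite for every $n$. The key observation at this stage is that $\omega \setminus A$ must be infinite, for otherwise $\omega = A \cup (\omega \setminus A)$ would belong to $\mathcal{I}$, contradicting $\mathcal{I} \neq \mathcal{P}(\omega)$.

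Now I fix an injection $f : A \to \omega \setminus A$ and set $y_k := x_k$ for $k \notin A$ and $y_k := x_{f(k)}$ for $k \in A$. By construction $\{y_k : k \in \omega\} \subseteq \{x_k : k \in \omega\}$ and $\{k : y_k \neq x_k\} \subseteq A \in \mathcal{I}$. To verify $\Gamma_y(\mathcal{J}) \subseteq F$, I pick any $\ell \notin F$ and some $n$ with $\ell \in U_n$; the crucial computation is
$$
\{k : y_k \in U_n\} \;=\; (A_n \setminus A) \,\cup\, f^{-1}(A_n \setminus A),
$$
which is finite by injectivity of $f$ and finiteness of $A_n \setminus A$, hence lies in $\mathcal{J}$ (every ideal contains $\mathrm{Fin}$). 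The main obstacle to anticipate is precisely the existence of the injection $f$: it hinges on $\omega \setminus A$ being infinite, which in turn relies on $\mathcal{I}$ being a proper ideal. Without this, substituting values on $A$ would risk reintroducing spurious $\mathcal{J}$-cluster points, and the whole construction would collapse.
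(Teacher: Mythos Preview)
Your proof is correct and follows the same strategy as the paper's: use the strongly Lindel\"{o}f property to extract a countable family of open sets whose $x$-preimages lie in $\mathcal{I}$, invoke the P-ideal property to absorb these preimages into a single $A\in\mathcal{I}$, and then redefine $x$ on $A$ using values of $x$ indexed by $\omega\setminus A$. The only noteworthy difference is that you cover all of $X\setminus\Gamma_x(\mathcal{I})$ while the paper covers only $\Delta=\Gamma_x(\mathcal{J})\setminus\Gamma_x(\mathcal{I})$; your choice makes the final verification $\Gamma_y(\mathcal{J})\subseteq\Gamma_x(\mathcal{I})$ completely explicit via the displayed computation, whereas the paper leaves that step as a one-line assertion.
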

\begin{proof}
The claim is obvious if $\Gamma_x(\mathcal{I})=\Gamma_x({\mathcal{J}})$. Hence, let us suppose that $\Delta:=\Gamma_x({\mathcal{J}})\setminus \Gamma_x(\mathcal{I})\neq \emptyset$ and, for each $z \in \Delta$, let $U_z$ be a neighborhood of $z$ such that $\{n: x_n \in U_z\} \in \mathcal{I}$. Then $\bigcup U_z$ is an open cover of $\Delta$. Since $X$ is 
strongly Lindel\"{o}f, there exists a countable subset $\{z_k: k \in \omega\}\subseteq \Delta$ such that $\bigcup U_{z_k}$ is an open subcover of $\Delta$. Moreover, since $\mathcal{I}$ is a P-ideal, there exists $I \in \mathcal{I}$ such that $\{n: x_n \in U_{z_k}\}\setminus I$ is finite for all $k$. 
At this point, let $(i_n)$ be the canonical enumeration of $\omega\setminus I$ and define the sequence $y$ by $y_n=x_{i_n}$ if $n \in I$ and $y_n=x_n$ otherwise. Since $\{n: x_n\neq y_n\}\subseteq I \in \mathcal{I}$, then $x \equiv_\mathcal{I} y$, hence we obtain by Lemma \ref{lem:basic}.\ref{item:5} that $\Gamma_x(\mathcal{I})=\Gamma_y({\mathcal{I}})$. The claim follows by the fact that every $\mathcal{J}$-cluster point of $y$ is also an $\mathcal{I}$-cluster point, therefore $\Gamma_y(\mathcal{I})=\Gamma_y(\mathcal{J})$.
\end{proof}

Lastly, given a topological space $(X,\tau)$ and an ideal $\mathcal{I}$, define the family
$$
\textstyle \tau(\mathcal{I}):=\left\{F^c \subseteq X: F=\bigcup_{x \in F^{\omega}}\Gamma_x(\mathcal{I})\right\},
%\{G\subseteq X: x_n \to \ell \in G \implies \{n: x_n \notin G\} \in \mathcal{I}\}.
$$
that is, $F$ is $\tau(\mathcal{I})$\emph{-closed} if and only if it is the union of $\mathcal{I}$-cluster points of $F$-valued sequences. In particular, it is immediate that $\tau=\tau(\mathrm{Fin})$.
\begin{lem}\label{lem:easy}
$\tau \subseteq \tau(\mathcal{I})$.
\end{lem}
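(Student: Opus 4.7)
The plan is to unwrap the definition of $\tau(\mathcal{I})$ and prove directly that every $\tau$-closed set $F$ is $\tau(\mathcal{I})$-closed, i.e., $F = \bigcup_{x \in F^{\omega}} \Gamma_x(\mathcal{I})$. This gives $\tau \subseteq \tau(\mathcal{I})$ by passing to complements. I expect no serious obstacle: both inclusions follow from very basic properties of $\Gamma_x(\mathcal{I})$ and the fact that $\mathcal{I}$ is a proper ideal.

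First I would prove the inclusion $\bigcup_{x \in F^{\omega}} \Gamma_x(\mathcal{I}) \subseteq F$. Fix a sequence $x=(x_n)$ with values in $F$ and a point $\ell \in \Gamma_x(\mathcal{I})$. For any neighborhood $U$ of $\ell$ the set $\{n: x_n \in U\}$ belongs to $\mathcal{I}^+$, hence is nonempty (since $\emptyset \in \mathcal{I}$), so there exists some $n$ with $x_n \in U \cap F$. Thus every neighborhood of $\ell$ meets $F$, which means $\ell \in \overline{F}=F$ by closedness of $F$.

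Next I would prove $F \subseteq \bigcup_{x \in F^{\omega}} \Gamma_x(\mathcal{I})$. For each $\ell \in F$ take the constant sequence $x=(\ell, \ell, \ldots) \in F^{\omega}$. For every neighborhood $U$ of $\ell$ we have $\{n : x_n \in U\} = \omega$, which belongs to $\mathcal{I}^+$ because $\mathcal{I}$ is a proper ideal (so $\omega \notin \mathcal{I}$). Hence $\ell \in \Gamma_x(\mathcal{I})$, showing the inclusion and completing the proof.

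In summary, the whole argument is a one-line verification in each direction: the forward inclusion uses only that $\mathcal{I}^+$-sets are nonempty and $F$ is $\tau$-closed; the reverse inclusion uses only the constant sequence together with $\omega \in \mathcal{I}^+$. There is nothing more delicate to handle.
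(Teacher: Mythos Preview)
Your proof is correct and follows essentially the same approach as the paper: both show that a $\tau$-closed $F$ satisfies $F=\bigcup_{x\in F^\omega}\Gamma_x(\mathcal{I})$, using constant sequences for the inclusion $F\subseteq\bigcup\Gamma_x(\mathcal{I})$. For the reverse inclusion the paper factors through $\Gamma_x(\mathcal{I})\subseteq\Gamma_x(\mathrm{Fin})$ (Lemma~\ref{lem:basic}\ref{item:1}) together with $\tau=\tau(\mathrm{Fin})$, whereas you give the underlying closure argument directly; these are the same idea.
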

\begin{proof}
Let $F$ be a $\tau$-closed set. Thanks to Lemma \ref{lem:basic}.\ref{item:1}, we have
$$%\begin{equation}\label{eq:chain}
\textstyle F\subseteq \bigcup_{x \in F^{\omega}} \Gamma_x(\mathcal{I}) \subseteq \bigcup_{x \in F^{\omega}} \Gamma_x(\mathrm{Fin})=F,
$$%\end{equation}
where the first inclusion is obtained by choosing the constant sequence $(f)$, for each fixed $f \in F$. Therefore, $F^c\in \tau(\mathcal{I})$.\end{proof}
The converse holds under some additional assumptions:
\begin{thm}\label{thm:sametopology}
Assume that one of the following conditions holds:
\begin{enumerate}[label={\rm (\roman{*})}]
\item \label{item:top1} $X$ is sequentially strongly Lindel\"{o}f and $\mathcal{I}$ is a P-ideal;
\item \label{item:top2} $X$ is first countable.
\end{enumerate}
Then $\tau=\tau(\mathcal{I})$.
\end{thm}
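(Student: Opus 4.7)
The inclusion $\tau \subseteq \tau(\mathcal{I})$ is already supplied by Lemma \ref{lem:easy}, so the remaining task is to prove $\tau(\mathcal{I}) \subseteq \tau$. I would fix a $\tau(\mathcal{I})$-closed set $F = \bigcup_{x \in F^\omega} \Gamma_x(\mathcal{I})$ and aim to show that $F$ is $\tau$-closed, i.e., $\overline{F}^{\tau} \subseteq F$.

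The foundation common to both cases is the following sequential closedness property, which follows directly from the definition of $\tau(\mathcal{I})$-closedness: whenever $(x_n) \subseteq F$ and $x_n \to p$ in $\tau$, one has $p \in F$. Indeed, viewing $x$ as a sequence in $F^\omega$, for every neighborhood $U$ of $p$ the set $\{n : x_n \notin U\}$ is finite and hence lies in $\mathcal{I}$ (as $\mathcal{I} \supseteq \mathrm{Fin}$), so $\{n : x_n \in U\} \in \mathcal{I}^\star \subseteq \mathcal{I}^+$, giving $p \in \Gamma_x(\mathcal{I}) \subseteq F$.

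Under assumption (ii) this is essentially enough: first countability guarantees that each $p \in \overline{F}^\tau$ is the $\tau$-limit of some sequence in $F$, and the observation above then forces $p \in F$. Under assumption (i) the route is more delicate. My plan is first to invoke Lemma \ref{thm:Clrelation} with $\mathcal{J} = \mathrm{Fin}$: the strong Lindel\"of property of $X$ together with $\mathcal{I}$ being a P-ideal lets us replace each $x \in F^\omega$ by an $\mathcal{I}$-equivalent $y$ taking values in $\{x_n\} \subseteq F$ and satisfying $\Gamma_x(\mathcal{I}) = \Gamma_y(\mathrm{Fin})$, thereby rewriting $F$ as a union of ordinary cluster point sets of sequences in $F$. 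The sequentially strongly Lindel\"of hypothesis would then be used to produce, for each $p \in \overline{F}^\tau$, a sequence in $F$ admitting $p$ as a classical cluster point; extracting a subsequence converging to $p$ and applying the sequential closedness observation would deliver $p \in F$.

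The hard part, and the main obstacle, is this final extraction step in Case (i): without first countability, manufacturing a sequence in $F$ that realises a given closure point $p$ as an $\mathcal{I}$-cluster point requires combining a Lindel\"of-style countable subcover argument at $p$ with the P-ideal absorption of $\mathcal{I}$, very much in the spirit of the construction carried out in the proof of Lemma \ref{thm:Clrelation}. Calibrating this construction so that it works under only the sequentially strongly Lindel\"of assumption, rather than outright first countability, is the delicate step I expect to dominate the writeup.
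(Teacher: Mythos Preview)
Your handling of case~(ii) is correct and matches the paper: first countability lets you pass from an $\mathcal{I}$-cluster point to an honest subsequential limit, and your sequential-closedness observation then finishes the job.

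For case~(i), however, you take a detour the paper avoids. The paper does not attempt to show $\overline{F}^{\tau}\subseteq F$ directly; instead it uses the identification $\tau=\tau(\mathrm{Fin})$ stated just before Lemma~\ref{lem:easy} and reduces to showing that every $\ell\in F$ which is an $\mathcal{I}$-cluster point of some $F$-valued sequence $x$ is an ordinary cluster point of some $F$-valued sequence $y$. That is \emph{exactly} the conclusion of Lemma~\ref{thm:Clrelation} with $\mathcal{J}=\mathrm{Fin}$ (the sequence $y$ produced there satisfies $\{y_n\}\subseteq\{x_n\}\subseteq F$ and $\Gamma_x(\mathcal{I})=\Gamma_y(\mathrm{Fin})$), so the paper's argument for~(i) is one line. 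Your first move---invoking Lemma~\ref{thm:Clrelation}---is right, and at that point the paper is done.

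The further programme you sketch (for each $p\in\overline{F}^{\tau}$, manufacture an $F$-valued sequence clustering at $p$, then extract a convergent subsequence and apply sequential closedness) is not in the paper and is precisely the step you flag as ``the hard part''. It is also genuinely problematic as stated: extracting a convergent subsequence from a mere cluster point needs first countability, which you do not have in case~(i). The strongly Lindel\"of and P-ideal hypotheses in~(i) are there solely to feed Lemma~\ref{thm:Clrelation}, not to support a sequential-closure argument at arbitrary points of $\overline{F}^{\tau}$. In short, drop the last two paragraphs of your plan for~(i) and stop after the appeal to Lemma~\ref{thm:Clrelation}.
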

\begin{proof}
Thanks to Lemma \ref{lem:easy}, it is sufficient to show that $\tau(\mathcal{I}) \subseteq \tau$. Let $F$ be a $\tau(\mathcal{I})$-closed set. Then, it is enough to show that if $\ell \in F$ is an $\mathcal{I}$-cluster point of some $F$-valued sequence $x$, it is also an ordinary limit point of some $F$-valued sequence $y$.

\ref{item:top1}  This follows directly by Lemma \ref{thm:Clrelation}, setting $\mathcal{J}=\mathrm{Fin}$.

\ref{item:top2} Let $(U_k)$ be a decreasing local base at $\ell$. Then, there exists a subsequence $(x_{n_k})$ converging to $\ell$: to this aim, set $S_k:=\{n: x_n\in U_k\}$ for each $k$, fix $n_1 \in S_1$ arbitrarily and, for each $k\in \omega$, define $n_{k+1}:=\min S_{k+1}\setminus \{1,\ldots,n_k\}$ (note that this is possible since each $S_k$ is infinite).
\end{proof}

%%%%%%%%%%%%%%%%%%%%%%%%%%%%%%%%%%%%%%%%%%%%%%%%%%%%%%%%%%%%%%%%%%%%%%%
\section{Characterizations}\label{sec:charact}

Given an ideal $\mathcal{I}$ and a sequence $x$ taking values in a topological space $X$, we define the $\mathcal{I}$\emph{-filter generated by} $x$ as
$$
\mathscr{F}_x(\mathcal{I}):=\left\{Y\subseteq X: \{n: x_n \notin Y\} \in \mathcal{I}\right\}.
$$
It is immediate that $\mathscr{F}_x(\mathcal{I})$ is a filter on $X$ with filter base 
$$
\mathcal{B}_x(\mathcal{I}):=\{\{x_n: n\notin I\}: I \in \mathcal{I}\}.
$$
In addition, if $\mathcal{I}=\mathrm{Fin}$, then $\mathscr{F}_x(\mathcal{I})$ coincides with the standard filter generated by $x$, cf. \cite[Definition 7, p.64]{MR1726779}. 

With this notation, we are going to show that $\ell$ is an $\mathcal{I}$-cluster point of $x$ if and only if it is a cluster point of the filter $\mathscr{F}_x(\mathcal{I})$, that is, $\ell$ lies in the closure of all sets in the filter base $\mathcal{B}_x(\mathcal{I})$, c.f. \cite[Definition 2, p.69]{MR1726779}.

\begin{lem}\label{lem:firstinclusion}
$\bigcap_{B \in \mathcal{B}_x(\mathcal{I})} \overline{B} \subseteq \Gamma_x(\mathcal{I})$.
\end{lem}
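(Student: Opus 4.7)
The plan is to argue by contradiction. Suppose $\ell \in \bigcap_{B \in \mathcal{B}_x(\mathcal{I})}\overline{B}$ but $\ell \notin \Gamma_x(\mathcal{I})$. Then there exists an open neighborhood $U$ of $\ell$ such that the set $I := \{n : x_n \in U\}$ belongs to $\mathcal{I}$ (this is the direct negation of the defining property of an $\mathcal{I}$-cluster point).

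Next I would exhibit an element of the base $\mathcal{B}_x(\mathcal{I})$ whose closure misses $\ell$. The natural candidate is
$$
B := \{x_n : n \notin I\} \in \mathcal{B}_x(\mathcal{I}).
$$
By the very definition of $I$, every index $n \notin I$ satisfies $x_n \notin U$, so $B \subseteq X \setminus U$. Since $U$ is open, $X \setminus U$ is closed, hence $\overline{B} \subseteq X \setminus U$. But then $\ell \in \overline{B}$ (by assumption on $\ell$) forces $\ell \notin U$, contradicting the choice of $U$ as a neighborhood of $\ell$. Thus $\ell \in \Gamma_x(\mathcal{I})$, which is the desired inclusion.

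There is no real obstacle here: the argument is just an unwinding of the definition of $\mathcal{B}_x(\mathcal{I})$ together with the observation that the base element associated to the index set $I$ is, by construction, disjoint from $U$. The only minor point to be careful about is making sure we use the complement index set $\omega \setminus I$ (not $I$) when forming $B$, so that the containment $B \subseteq X \setminus U$ is genuinely true.
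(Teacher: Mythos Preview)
Your proof is correct and essentially identical to the paper's own argument: both proceed by contradiction, use the failing neighborhood $U$ to produce the index set $\{n : x_n \in U\} \in \mathcal{I}$, form the corresponding base element, and observe that its closure lies in the closed set $X\setminus U$, contradicting $\ell \in U$. The only cosmetic difference is notation (the paper calls the index set $J$ rather than $I$).
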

\begin{proof}
Let us suppose that $\ell \in \bigcap_{I \in \mathcal{I}}\overline{\{x_n: n\notin I\}}$, that is, for each $I \in \mathcal{I}$ there exists a subsequence $(x_{n_k})$ converging to $\ell$ such that $\{n_k: k \in \omega\} \cap I = \emptyset$. Suppose for the sake of contradiction that $\ell$ is not an $\mathcal{I}$-cluster point, i.e., there exists an open neighborhood $U$ of $\ell$ such that $J:=\{n: x_n \in U\}$ belongs to $\mathcal{I}$. Then, it follows that $\{x_n: n \notin J\} \in \mathcal{B}_x(\mathcal{I})$ hence
$$
\textstyle \ell \in \bigcap_{B \in \mathcal{B}_x(\mathcal{I})} \overline{B}\subseteq \overline{\{x_n: n \notin J\}} = \overline{\{x_n: x_n \notin U\}} \subseteq U^c,
$$
which is impossible since $\ell \in U$.
\end{proof}

However, if $X$ is first countable, then also the converse holds.
\begin{thm}\label{thm:characterizationbourbaki}
Let $\mathcal{I}$ be an ideal and $x$ be a sequence taking values in a first countable space $X$. Then $\Gamma_x(\mathcal{I})=\bigcap_{B \in \mathcal{B}_x(\mathcal{I})} \overline{B}$.
\end{thm}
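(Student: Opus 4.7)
The plan is to combine Lemma~\ref{lem:firstinclusion} with a direct construction exploiting first countability. Lemma~\ref{lem:firstinclusion} already gives the inclusion $\bigcap_{B \in \mathcal{B}_x(\mathcal{I})} \overline{B} \subseteq \Gamma_x(\mathcal{I})$ without any assumption on $X$, so the whole content of the theorem is the reverse inclusion $\Gamma_x(\mathcal{I}) \subseteq \bigcap_{B \in \mathcal{B}_x(\mathcal{I})} \overline{B}$.

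Fix $\ell \in \Gamma_x(\mathcal{I})$ and an arbitrary $I \in \mathcal{I}$; I want to show that $\ell \in \overline{\{x_n : n \notin I\}}$. Let $(U_k)$ be a decreasing countable local base at $\ell$, which exists by first countability. The key observation is that, for every $k$, the set $S_k := \{n : x_n \in U_k\}$ lies in $\mathcal{I}^+$ by the definition of $\mathcal{I}$-cluster point, and since $\mathcal{I}$ is closed under finite unions, removing the element $I \in \mathcal{I}$ keeps us in $\mathcal{I}^+$: if $S_k \setminus I$ were in $\mathcal{I}$, then $S_k \subseteq (S_k \setminus I) \cup I$ would be too, a contradiction. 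In particular $S_k \setminus I$ is nonempty (in fact infinite, since $\mathcal{I}$ contains all finite sets), so I can select $n_k \in S_k \setminus I$ for each $k$.

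The sequence $(x_{n_k})$ then satisfies $x_{n_k} \in U_k$ for every $k$, so by the local base property $x_{n_k} \to \ell$ in $X$. Since each $x_{n_k}$ belongs to $\{x_n : n \notin I\}$, it follows that $\ell \in \overline{\{x_n : n \notin I\}}$. The set $I \in \mathcal{I}$ was arbitrary, so $\ell$ lies in the closure of every element of the filter base $\mathcal{B}_x(\mathcal{I})$, which is precisely what we wanted.

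I do not anticipate any serious obstacle: once first countability is invoked to extract a decreasing local base, the argument reduces to the trivial observation that $\mathcal{I}^+ \setminus \mathcal{I} = \mathcal{I}^+$ in the sense that subtracting an ideal element preserves membership in $\mathcal{I}^+$. The only subtlety worth flagging is that this direction genuinely uses first countability; a counterexample in a space with a point of uncountable character would prevent the construction of $(n_k)$ from a single selection per neighborhood level, which is why the theorem is stated under this hypothesis even though Lemma~\ref{lem:firstinclusion} is not.
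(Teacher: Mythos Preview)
Your proof is correct and follows essentially the same route as the paper: invoke Lemma~\ref{lem:firstinclusion} for one inclusion, then for $\ell\in\Gamma_x(\mathcal{I})$ and $I\in\mathcal{I}$ use a decreasing local base $(U_k)$ and the fact that $S_k\setminus I\in\mathcal{I}^+$ to produce points of $\{x_n:n\notin I\}$ in every $U_k$. The only cosmetic difference is that the paper arranges the indices $n_k$ to be strictly increasing (so that $(x_{n_k})$ is literally a subsequence of $x$), whereas you simply pick $n_k\in S_k\setminus I$; since the goal is only to place $\ell$ in the closure of $\{x_n:n\notin I\}$, your selection already suffices.
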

\begin{proof}
Thanks to Lemma \ref{lem:firstinclusion}, it is sufficient to show that $\Gamma_x(\mathcal{I})\subseteq \bigcap_{B \in \mathcal{B}_x(\mathcal{I})} \overline{B}$. 
Let us suppose that $\ell$ is an $\mathcal{I}$-cluster point of $x$ and fix a decreasing local base $(U_k)$ at $\ell$, so that $S_k:=\{n: x_n \in U_k\} \in \mathcal{I}^+$ for all $k$. Fix also $I \in \mathcal{I}$ and note that $T_k:=S_k\setminus I \in \mathcal{I}^+$ for all $k$ (in particular, each $T_k$ is infinite). Then, we have to prove that $\ell \in \overline{\{x_n: n\notin I\}}$, i.e., there exists a subsequence $(x_{n_k})$ converging to $\ell$ such that $n_k \notin I$ for all $k$. To this aim, it is enough to fix $n_1 \in T_1$ arbitrarily and $n_{k+1}:=\min T_{k+1}\setminus \{1,\ldots,n_k\}$ for all $k \in \omega$. It follows by construction that $\lim_{k\to \infty}x_{n_k}=\ell$ and $n_k \notin I$ for all $k$.
\end{proof}

As a corollary, we obtain another proof of Lemma \ref{lem:basic}.\ref{item:4}, provided $X$ is first countable.

We conclude with another characterization of the set of $\mathcal{I}$-cluster points, which subsumes the results contained in \cite{MR2040222}.
\begin{thm}\label{thm:char2}
Let $x$ be a sequence taking values in a regular Hausdorff space $X$
such that $\{n: x_n \notin K\} \in \mathcal{I}$ for some compact set $K$.  
Then $\Gamma_x(\mathcal{I})$ is the smallest closed set $C$ such that $\{n: x_n \notin U\} \in \mathcal{I}$ for all sets $U$ containing $C$.
\end{thm}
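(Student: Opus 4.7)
The plan is to verify two things: first that $\Gamma_x(\mathcal{I})$ itself has the stated property (with ``sets'' read, as is customary in this literature, as open sets), and second that it is minimal among closed sets with this property. Closedness of $\Gamma_x(\mathcal{I})$ is already Lemma \ref{lem:basic}.\ref{item:4}, so only the two set-theoretic claims remain.

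For the property itself, fix an open set $U$ containing $\Gamma_x(\mathcal{I})$ and suppose, for contradiction, that $A := \{n : x_n \notin U\} \in \mathcal{I}^+$. Since the hypothesis gives $\{n : x_n \in K\} \in \mathcal{I}^\star$, intersecting yields $\{n : x_n \in K \setminus U\} \in \mathcal{I}^+$. The set $K \setminus U$ is a closed subset of the compact set $K$, hence compact. Lemma \ref{lem:basic}.\ref{item:6} then produces an $\mathcal{I}$-cluster point of $x$ in $K \setminus U$, contradicting $\Gamma_x(\mathcal{I}) \subseteq U$.

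For minimality, let $C$ be any closed set satisfying the stated property and pick $\ell \in \Gamma_x(\mathcal{I})$. Assume toward a contradiction that $\ell \notin C$. Since $X$ is regular Hausdorff and $C$ is closed, we may choose disjoint open sets $V$ and $U$ with $\ell \in V$ and $C \subseteq U$. The property of $C$ gives $\{n : x_n \notin U\} \in \mathcal{I}$, and the inclusion $V \subseteq U^c$ yields
\[
\{n : x_n \in V\} \subseteq \{n : x_n \notin U\} \in \mathcal{I},
\]
contradicting $\ell \in \Gamma_x(\mathcal{I})$. Hence $\Gamma_x(\mathcal{I}) \subseteq C$, proving minimality.

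The compactness hypothesis is used in exactly one place, namely to invoke Lemma \ref{lem:basic}.\ref{item:6} on $K \setminus U$; this is the only nontrivial ingredient, and without it there is no reason an $\mathcal{I}$-positive ``escape'' set should contribute a cluster point. The regularity hypothesis, used only in the separation step of the second part, is the other essential assumption. I expect no real obstacle beyond correctly invoking these two ingredients in the right places.
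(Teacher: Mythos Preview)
Your proof is correct and follows essentially the same approach as the paper: both parts hinge on Lemma~\ref{lem:basic}.\ref{item:6} applied to the compact set $K\setminus U$ for the first claim, and on regular separation for the minimality. Your version is in fact slightly more streamlined than the paper's, which introduces an auxiliary $K$-valued sequence $y\equiv_{\mathcal{I}} x$ and, in the minimality step, separates $\ell$ from $K\cap C$ rather than directly from $C$; your direct route avoids both detours without loss.
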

\begin{proof}
Fix $\kappa \in K$ and define the sequence $y$ by $y_n=\kappa$ if $x_n \notin K$ and $y_n=x_n$ otherwise. 
It follows by Lemma \ref{lem:basic}.\ref{item:6}-\ref{item:5} %, and \ref{item:1} 
that
$
\emptyset \neq \Gamma_x(\mathcal{I})=\Gamma_y(\mathcal{I}) \subseteq K.
$ 
Let also $\mathscr{C}$ be the family of closed sets $C$ such that $\{n: x_n \notin U\} \in \mathcal{I}$ for all open subsets $U\supseteq C$ (note that $\{n: x_n \notin U\} \in \mathcal{I}$ if and only if $\{n: y_n \notin U\} \in \mathcal{I}$).

First, we show that $\Gamma_x(\mathcal{I}) \in \mathscr{C}$. Indeed, $\Gamma_x(\mathcal{I})$ is closed by Lemma \ref{lem:basic}.\ref{item:4}; moreover, let us suppose for the sake of contradiction that there exists an open set $U$ containing $\Gamma_x(\mathcal{I})$ such that $\{n: x_n \notin U\} \in \mathcal{I}^+$, that is, $\{n: y_n \notin U\}=\{n: y_n \in K\setminus U\} \in \mathcal{I}^+$. Considering that $K\setminus U$ is compact, we obtain by Lemma \ref{lem:basic}.\ref{item:6} that there exists an $\mathcal{I}$-cluster point of $y$ in $K\setminus U$. This contradicts the fact that $\Gamma_y(\mathcal{I})=\Gamma_x(\mathcal{I})\subseteq U$.

Lastly, fix $C \in \mathscr{C}$ and let us suppose that $\Gamma_x(\mathcal{I})\setminus C\neq \emptyset$. Fix $\ell \in \Gamma_x(\mathcal{I})\setminus C$ and, by the regularity of $X$, there exist disjoint open sets $U$ and $V$ containing the closed sets $\{\ell\}$ and $K\cap C$, respectively. This is impossible, indeed the set $\{n: x_n \in V\}$ belongs to $\mathcal{I}$ since $C \in \mathscr{C}$, and, on the other hand, it contains $\{n: x_n \in U\} \in \mathcal{I}^+$ since $\ell$ is an $\mathcal{I}$-cluster point.
\end{proof}

\subsection*{Acknowledgments} 
The authors are grateful to Szymon G\l ab (\L{}\'{o}d\'{z} University of Technology, PL) and Ond\v{r}ej Kalenda (Charles University, Prague) for several useful comments.

\bibliographystyle{amsplain}
%\bibliography{ideal}

\end{document}